\title{Well-distribution of Polynomial maps on locally compact groups} % and unique erogodicity} 
\author{Tom Meyerovitch}
\address{Ben Gurion University of the Negev.
	Departement of Mathematics.
	Be'er Sheva, 8410501, Israel
}
\crefname{theorem}{Theorem}{Theorems}
\crefname{thm}{Theorem}{Theorems}
\crefname{mainthm}{Theorem}{Theorems}
\crefname{lemma}{Lemma}{Lemmas}
\crefname{lem}{Lemma}{Lemmas}
\crefname{remark}{Remark}{Remarks}
\crefname{prop}{Proposition}{Propositions}
\crefname{defn}{Definition}{Definitions}
\crefname{corollary}{Corollary}{Corollaries}
\crefname{cor}{Corollary}{Corollaries}
\crefname{section}{Section}{Sections}
\crefname{figure}{Figure}{Figures}
\crefname{quest}{Question}{Questions}
\newcommand{\N}{\mathbb{N}}
\newcommand{\Z}{\mathbb{Z}}
\newcommand{\R}{\mathbb{R}}
\newcommand{\C}{\mathbf{C}}
\newcommand{\Poly}{\mathit{Poly}}
\newcommand{\oA}{\overline{A}}
\newtheorem{thm}{Theorem}[section]
\newtheorem{lemma}[thm]{Lemma}
\newtheorem{prop}[thm]{Proposition}
\newtheorem{quest}[thm]{Question}
\theoremstyle{definition}
\newtheorem{definition}[thm]{Definition}
\newtheorem{remark}{Remark}
\begin{document}
	\begin{abstract}  
    Weyl's classical equidistribution theorem states that  real valued polynomial sequences are uniformly distributed modulo 1,  unless all non-constant coefficients  are rational. A  continuous function  between two topological groups  is called a \emph{polynomial map} of degree at most $d$ if it vanishes under any $d+1$ difference operators.
    Leibman, and subsequently Green and Tao,  formulated and proved  equidistribution theorems about polynomial sequences that take values in a nilmanifold. 
    We  formulate and prove some general equidistribution  theorems regarding polyonomial maps from a locally compact group into a compact abelian group. 
    \end{abstract}

\maketitle

\section{Introduction}

Weyl's classical equidistribution theorem says that for any polynomial $P$ with real coefficients, the fractional part of the sequence  $(P(n))_{n=1}^\infty$ is uniformly distributed modulo 1, unless all non-constant coefficients of $P$ are rational. In the later case, the sequence is periodic modulo $1$ and  takes rational values, up to an additive constant.

The following natural notion of ``polynomial mappings of groups'' was studied by Leibman \cite{MR1910931}: A  continuous function  between two topological groups  is called a \emph{polynomial map} of degree at most $d$ if it vanishes under any $d+1$ difference operators.  See the following section for details. It is not difficult to check that any  polynomial map  $P:\R \to \R$ of degree at most $d$ in Leibman's sense above is of the form $P(x)=\sum_{i=0}^d a_i x^i$ for some $a_0,\ldots,a_d \in \R$. 
Thus, Weyl's equidistribution theorem can be interpreted as a uniform distribution result about polynomial maps $P:\Gamma_1 \to \Gamma_2$ where $\Gamma_1=\Z$ and $\Gamma_2 = \R/\Z$. Weyl's original paper \cite{MR1511862} already contains equidistribution results for ``multidimensional'' and ``multivariate'' polynomials, or equivalently equidistribution results for polynomial maps $P:\Gamma_1 \to \Gamma_2$ where $\Gamma_1= \Z^{d_1}$ and $\Gamma_2= (\R/\Z)^{d_2}$.
Leibman formulated and proved  equidistributions theorems for polynomial sequences taking values in a compact nilmanifold  \cite{MR2122919}. Subsequently, Leibman extended his equidistributions theorems for ``multivariate sequences'' 
 taking values in a compact nilmanifold \cite{MR2122920}.

The purpose of this note is to formulate and prove  the following generalizations of Weyl's  theorem:

\begin{thm}
\label{thm:poly_to_torus_well_distributed_or_rational}
Let $\Gamma$ be a locally compact metrizable amenable group, and let $P:\Gamma \to \R/\Z$ be a polynomial map. 
Then either $P$ is totally well-distributed with respect to Haar measure on $\R/\Z$, or $P$ takes values in a coset of a finite subgroup of $\R/\Z$. 
\end{thm}

The following theorem is a generalization  of the ``multidimensional Weyl theorem'' regarding polynomials maps from a finitely generated group $\Gamma$ into $\R/\Z$:
\begin{thm}\label{thm:Weyl_finitely_generated_to_d_torus}
Let $\Gamma$ be a finitely generated amenable group, and let $P:\Gamma \to (\R/\Z)^d$ be a polynomial map.

Then there exists a  subgroup $\Gamma_1 < \Gamma$ such that $[\Gamma:\Gamma_1] < + \infty$ and a  %sub-torus 
closed, connected subgroup $T \subseteq (\R/\Z)^d$ and a function $\alpha:\Gamma/ \Gamma_1 \to (\R/\Z)^d/T$ 
such that:
\begin{enumerate}
\item For any  $\gamma \in \Gamma$ we have $\overline{P(\gamma \Gamma_1)}=T + \alpha(\gamma \Gamma_1)$.
\item For any  $\gamma \in \Gamma$, let $P_\gamma:\Gamma_1 \to T$ be given by $P_\gamma(\tilde \gamma)= P(\gamma \tilde \gamma)-\alpha(\gamma \Gamma_1)$. Then $P_\gamma$ is well-distributed with respect to Haar measure on $T$.
\end{enumerate}
\end{thm}

\Cref{thm:Weyl_finitely_generated_to_d_torus} says that ``up to finite index'' a polynomial map from a finitely generated group into a (finite-dimensional) torus is well-distributed with respect to Haar measure.

Considering polynomial maps from a finitely generated group $\Gamma$ into a compact abelian group $G$ we have the following statement:

\begin{thm}\label{thm:finitely_generated_to_compact_poly_uniquely_ergodic}
Let $\Gamma$ be a finitely generated group, let $G$ be a compact abelian group, and let $P:\Gamma \to G$ be a polynomial map.  Then $P$ is uniquely ergodic.%admits a unique $\Gamma$-invariant probability measure. 
\end{thm}
Compared \Cref{thm:Weyl_finitely_generated_to_d_torus}, the statement of \Cref{thm:finitely_generated_to_compact_poly_uniquely_ergodic} does not involve amenability of $\Gamma$, and provides an assertion about the distribution of the orbit of  $P$  in the space of functions  $G^\Gamma$ rather than just the projection of this distribution onto $G$. This comes at a price: The statement of \Cref{thm:finitely_generated_to_compact_poly_uniquely_ergodic} does not provide an explicit description of the  limiting distribution. 

We will say a bit more about this in the final section.

 Different extensions and refinements of Weyl's equidistribution theorem can be found in the literature. 
%Some of these extensions obtain uniform distribution for  more general classes of  sequences.
For instance, Boshernitzan \cite{MR1269206} obtained a uniform distribution modulo 1 
for a class of sequences that includes  $(P(\sqrt[k]{n}))_{n=1}^\infty$, where $P$ is a polynomial (excluding certain ``rational'' degenerate cases).
Bergelson and Leibman obtained an analog of Weyl's equidistribution theorem in finite characteristic \cite{MR3439703}, which is a uniform distribution theorem for (a special class of) polynomial maps $P:\Gamma_1 \to \Gamma_2$, where $\Gamma_1$ is the discrete countable abelian group of polynomials over a finite field, and $\Gamma_2$ is the compact group obtained as the quotient of  formal Laurent series by polynomials over a finite field.
Bergelson and Moreira introduced an ``adelic'' version of Weyl's equidistribution theorem \cite[Theorem 5.2]{MR3479166}.

At first glance, it might seem surprising that  ergodic theoretic results about polynomial maps can be obtained under such general assumptions on the group $\Gamma$. This seeming generality is somewhat deceiving: Liebman has shown that any polynomial map which takes values in a nilpotent is obtained by lifting a  function defined on a nilpotent quotient \cite{MR1910931}. Liebman's result allows us to easily derive \Cref{thm:finitely_generated_to_compact_poly_uniquely_ergodic} from \Cref{thm:poly_to_torus_well_distributed_or_rational}. 

The paper is organized as follows:

In \Cref{sec:prelim}, we introduce the basic notions involved in the statement of our results. 
 We also state and prove a basic lemma relating the notions of well-distirbution and unique ergodicity.
In \Cref{sec:homo_non_abelian} we prove well-distirbution and unique ergodicity for polynomial maps of degree $1$. In the case of degree $1$ polynomials, the statement of the results and the proofs are relatively straightforward and probably well-known. We bring them mostly for context and  to contrast the case of degree $1$ polynomials with the case of higher degree.
\Cref{sec:poly_comb} discusses relatively simple combinatorial properties of polynomial maps that will be applied in our later proofs. The discussion in \Cref{sec:poly_comb} is elementary, in the sense that it does not involve topology, analysis, or measure theory (in fact, it requires nothing beyond the basic definition of a group).
In \Cref{sec:poly_well_dist} we prove \Cref{thm:poly_to_torus_well_distributed_or_rational} and \Cref{thm:Weyl_finitely_generated_to_d_torus}. In \Cref{sec:uniqu_ergo_poly}
we deduce \Cref{thm:finitely_generated_to_compact_poly_uniquely_ergodic}, using the previous results. \Cref{sec:concluding} we discuss possible extensions, refinements, generalizations, and additional remarks.

I thank Yaar Solomon and Matan Tal for their remarks on early versions and Vitaly Bergelson for valuable comments, references, and encouragement to make this writing publicly available.

\section{Preliminaries}\label{sec:prelim}
We introduce some notation and clarify definitions of various terms in the statement of the results.

We use multiplicative notation for the group operation in a locally compact metrizable topological  group $\Gamma$, whenever we do not assume $\Gamma$ is commutative. We let $m_\Gamma$ denote the left Haar measure on $\Gamma$. For a compact group, we always assume that the Haar measure is normalized to be a probability measure. In the case that $\Gamma$ is discrete, we normalize sothat $m_\Gamma(1_\Gamma)=1$, so $m_\Gamma$ is simply the counting measure.  Otherwise, when $\Gamma$ is not compact, $m_\Gamma(\Gamma)= + \infty$ and  we assume some (arbitrary) choice of normalization for $m_\Gamma$. 

\begin{definition}
    Given a measurable set $F \subseteq \Gamma$ with $0 < m_\Gamma(F) < +\infty$
    and a measurable subset $A \subseteq \Gamma$ we denote:
    \[
    m_F(A)  = \frac{m_{\Gamma}(A \cap F)}{m_\Gamma(F)}.
    \]
\end{definition}
Then $m_F$ is a Borel probability measure on $\Gamma$.

For a compact abelian group $G$, we usually
 use additive notation for the group operation. The one  exception to this convention is the case where $G = \mathbb{S}^1 = \{ z \in \mathbb{C}^* :~ |z|=1\}$ is the unit sphere in $\mathbb{C}$, in which case the group operation is the multiplication of complex numbers, for which we use multiplicative notation. 
Also, for a compact abelian group $G$ we let $\widehat{G}$ denote the  group of characters of $G$, or the Pontryagin dual group, namely the continuous homomorphism from $G$ to $\mathbb{S}^1$ .

If $G$ is a compact abelian group, $H <G$ is a closed subgroup  and $C = v+H$ is a coset of $H$ (where $v \in G$ is some element of the group), by ``the Haar measure on $C$'' we mean the unique probability measure on $C$ which is invariant under translation by elements of $H$. Equivalently, the Haar measure on the coset $C$ is the pushforward of the Haar measure on the closed subgroup $H$ via the map $x \mapsto x+v$ from $H$ to $C$.

In the following, $\Gamma$ will denote a locally compact, metrizable  topological  group. 
\begin{definition}
For subset $F_0, F \subset \Gamma$ and $\gamma \in \Gamma$ we denote:
\[
\partial_{\gamma} F = \gamma F \triangle F \mbox{ and }
\partial_{F_0} F = \bigcup_{\gamma \in F_0} \partial_{\gamma} F.
\]

For $\gamma \in \Gamma$ let $\sigma_\gamma:G^\Gamma \to G^\Gamma$ be given by $\sigma_\gamma(\Phi)(x):=\Phi(\gamma^{-1}x)$.

A sequence $(F_n)_{n=1}^\infty$ of measurable subset of $\Gamma$ having finite non-zero Haar measure is called a \emph{F\o lner sequence} if for every $\gamma \in \Gamma$ it holds that:
\[
\lim_{n \to \infty} \frac{m_\Gamma\left(\partial_\gamma F_n\right)}{m_\Gamma( F_n)} = 0.
\]
\end{definition}

equation 

\begin{definition}
Let $\Gamma$ and $G$ be groups, with the group operation written in multiplicative notation.
Given  a function $f:\Gamma \to G$ and $\gamma \in \Gamma$, let  $\Delta_\gamma f: \Gamma \to G$ denote the discrete derivative of $f$ with respect to $\gamma$:
\begin{equation}\label{eq:Delta_f_def}
    \Delta_\gamma f(x) = f(\gamma x)(f(x))^{-1} \mbox{ for } x \in \Gamma.
\end{equation}

If $G$ is abelian and additive notation is used for the group operation, \Cref{eq:Delta_f_def} takes the following form:
\[
\Delta_\gamma f(x) = f(\gamma x) -f(x) \mbox{ for } x \in \Gamma.
\]
\end{definition}
Note that for functions $\Phi:\Gamma \to \mathbb{S}^1$, since the group operation is written as multiplication,  we have 
\[
\Delta_\gamma \Phi(x) = \Phi(\gamma x)\overline{\Phi(x)},
\]
where $\overline{\Phi(x)}$ is the complex conjugate of $\Phi(x)$.

\begin{definition}
A continuous function
$P:\Gamma \to G$ is called a \emph{Polynomial map of degree at most $d-1$} if 
\[ \Delta_{\gamma_d}\ldots  \Delta_{\gamma_1} P =0 ~\forall \gamma_1,\ldots,\gamma_d \in \Gamma.
\]
We denote by $\Poly_d(\Gamma,G)$ the set of polynomial maps from of degree at most $d$ from $\Gamma$ to $G$.
\end{definition}
In particular, a polynomial map of degree $0$ is a constant map from $\Gamma$ to $G$.

A sequence $(a_n)_{n=1}^\infty$ is said to be \emph{equidistributed}  with respect to a probability measure $\mu$ on $\mathbb{R}$ if the sequence of measures $\frac{1}{N}\sum_{n=1}^N \delta_{a_n}$ converge weak-$*$ to $\mu$.
A sequence $(a_n)_{n=1}^\infty$ is said to be \emph{well-distributed} with respect to a probability measure $\mu$ on $\mathbb{R}$ if for every sequence of intervals $(I_j)_{j=1}^\infty$ whose length tends to infinity, the sequence of measures  $\frac{1}{|I_j|}\sum_{n \in I_j}\delta_{a_n}$ converge  to $\mu$ with respect to the weak-* topology. It is obvious that any well-distributed sequence is also equidistributed.
The notion of well-distribution can be naturally generalized to functions from an amenable group $G$ to a compact metrizable topological space $X$ as follows:

\begin{definition}
Let $\Gamma$ be a locally compact amenable group.

For $\phi \in L^\infty(\Gamma)$ and a measurable set $F \subset \Gamma$ with $0 < m_\Gamma(F) < \infty$, we denote
\[
A_F(\phi) :=  \frac{1}{m_\Gamma(F)}\int_{F} \phi(x)dm_\Gamma(x) = \int \phi(x) dm_F(x).
\]
\end{definition}

\begin{definition}
 We say that  $f \in L^\infty(\Gamma)$ has \emph{mean $z_0 \in \mathbb{C}$} if 
for any F\o lner sequence $(F_n)_{n=1}^\infty$ in $\Gamma$ the following holds:
\[
\lim_{n \to \infty}A_{F_n}(f)  = z_0.
\]
Equivalently, $f \in L^\infty(\Gamma)$ has mean $z_0$ if $\Lambda(f)=0$ for any $\Gamma$-invariant mean  $\Lambda$ on $L^\infty(\Gamma)$.
\end{definition}

\begin{definition}
For $f \in L^\infty(\Gamma)$, we denote
\[\oA_\Gamma(f) = \sup \left\{ \limsup_{n \to \infty} A_{F_n}(f) :~ (F_n)_{n=1}^\infty \mbox{ is a F\o lner sequence} \right\} \]
\end{definition}

\begin{definition}
We say that a Borel subset $B \subseteq \Gamma$ \emph{has density} $p \in [0,1]$ if 
for any F\o lner sequence $(F_n)_{n=1}^\infty$ in $\Gamma$ the following holds:
\[
\lim_{n \to \infty}m_{F_n}(B) = p.
\]
Equivalently, $B \subseteq \Gamma$ has density $p$ if and only if the indicator function of $B$ has mean $p$.
\end{definition}

\begin{definition} 
Let $\phi:\Gamma \to X$ be a Borel measurable function from a locally compact amenable group $\Gamma$ to a compact metrizable space $X$, and let $\mu$ be a Borel probability measure on $X$.
We say that $\phi$ is \emph{well-distributed with respect to $\mu$} if for every continuous function $f:X \to \mathbb{C}$ the mean of $f \circ \phi:\Gamma \to \mathbb{C}$ is $\int f d\mu$. 
\end{definition}

In other words, a function $\phi:\Gamma \to X$ is well distributed with respect to a probability measure $\mu$ on $X$ if  for any F\o lner sequence $(F_n)_{n=1}^\infty$ in $\Gamma$ the measures $\int_{F_n}\delta_{\phi(\gamma)} dm_{F_n}(\gamma)$ converge weak-$*$ to $\mu$.

\begin{definition}
A Borel measurable function $\phi:\Gamma \to X$ is \emph{totally well-distributed with respect to $\mu$} if for every finite-index subgroup $\Gamma_0 < \Gamma$ and every $\gamma \in \Gamma$, the restriction of $\gamma \phi$ to $\Gamma_0$ is well-distributed (as a function from $\Gamma_0$ to $X$).
\end{definition}

Given a locally compact Polish group $\Gamma$ and a compact topological space $X$, we denote by $C(\Gamma,X)$ the space of continuous functions from $\Gamma$ to $X$. The space $C(\Gamma,X)$, equipped with the topology of uniform convergence on compact sets and  is a itself a locally compact Polish topological space. If  $G$ is a compact metrizable group, $C(\Gamma,G)$ is a locally compact Polish group with respect to the operation of pointwise multiplication (in $G$).
The group $\Gamma$ acts on $C(\Gamma,X)$ by $(\gamma \cdot \phi)(\tilde \gamma)= \phi(\gamma^{-1} \tilde \gamma)$, $\phi \in C(\Gamma,X)$, $\gamma,\tilde \gamma \in \Gamma$. 
A probability measure $\mu$ on $C(\Gamma,X)$ is called \emph{$\Gamma$-invariant} when $\gamma_* \mu= \mu$ for every $\gamma \in \Gamma$.

In the case $\Gamma$ is countable and discrete, it holds that $C(\Gamma,X)= X^\Gamma$ is the space of functions from $\Gamma$ to $X$. In this case, the topology of uniform convergence on compact sets is just the product topology on $X^\Gamma$, so $X^\Gamma$ is compact  by Tychonoff's theorem. 
Given $\phi \in C(\Gamma,X)$, the $\Gamma$-orbit of $\phi$ is 
\[
\Gamma \phi = \left\{ g \cdot \phi:~ g \in \Gamma\right\}.
\]
The orbit-closure of $\phi$ (with respect to the action of $\Gamma$) is $\overline{\Gamma \phi}$, namely, the smallest closed set in $C(\Gamma,X)$ that contains the $\Gamma$-orbit of $\phi$.

\begin{definition}
Let $\Gamma$ be a locally compact group and $X$ a compact metizable topological space.
A continuous function $\phi \in C(\Gamma,X)$ is called \emph{uniquely ergodic} if there exists a unique $\Gamma$-invariant probability measure on $C(\Gamma,X)$ whose support is contained in  the orbit closure $\overline{\Gamma \phi}$.
\end{definition}

Given $\Phi \in C(\Gamma,X)$ and a compact subset $F \subset \Gamma$ let  $\Phi^F:\Gamma \to C(F,X)$ be defined by $\Phi^F(\gamma)= ( \gamma \Phi)\mid_{F}$ for all $\gamma \in \Gamma$.

We formulate the following simple lemma that relates unique ergodicity  and well-distribution:

\begin{lemma}\label{lem:unique_ergodicity_implies_equidistribution}
Let $\Gamma$ be a locally compact amenable group,  let $X$ be a topological space,  
and let $\Phi \in C(\Gamma,X)$ be a continuous function from $\Gamma$ to $X$. 
Suppose that the orbit closure of $\Phi$ is compact.
Then $\Phi$ is uniquely ergodic if and only if for every finite set $F \subseteq \Gamma$, the function $\Phi^F:\Gamma \to C(F,X)$ is well-distributed.
In that case, if $\mu$ is the unique $\Gamma$-invariant probability measure on $\overline{\Gamma \phi}$, then for every finite set $F \subseteq \Gamma$, the function $\Phi^F:\Gamma \to C(F,X)$ is well-distributed with respect to the pushforward of $\mu$ via the natural projection $C(\Gamma,X) \mapsto C(F,X)$.
\end{lemma}
\begin{proof}
Suppose that $\mu$ is the unique $\Gamma$-invariant probability measure on $\overline{\Gamma\Phi}$. 
Give a finite set $F \subseteq \Gamma$, let $\mu_F$ denote pushforward of $\mu$ via the restriction map from $X^\Gamma$ to $X^F$.
 We will prove that $\Phi^F$ is well-distributed by showing that for every F\o lner sequence $(F_n)_{n=1}^\infty$ in $\Gamma$,  the sequence of measures $\int_{F_n} \delta_{g \Phi^F}dm_{F_n}(g)$ converges weak-$*$  to $\mu_F$. 
 The assumption that $\overline{\Gamma \Phi}$ is compact implies that $\overline{ \Phi^F(\Gamma)}$ is a compact subset of $C(X,F)$. Thus, for every F\o lner sequence $(F_n)_{n=1}^\infty$ in $\Gamma$ there is a subsequence such that $\int_{F_n} \delta_{g \Phi^F}dm_{F_n}(g)$ converges weak-$*$  to some probability measure on  $\overline{\Gamma \Phi}$. If $\mu_F$ is the only limit point, we are done.
 Suppose by contradiction that this is not the case. Passing to a subsequence we can assume that $\int_{F_n} \delta_{g \Phi^F}dm_{\Gamma}(g)$ converges weak-$*$  to a probability measure $\nu_1 \ne \mu_{F}$ on $C(F,X)$. Starting with $K_1=F$, fix a an increasing sequence  $K_1 \subseteq K_2 \subseteq \ldots \subseteq K_j \subseteq \ldots$  of compact subsets
 such that $\bigcup_{j=1}^\infty K_j = \Gamma$.  Passing to a subsequence of $(F_n)_{n=1}^\infty$ 
 at every step, and applying the Cantor diagonal argument, we can assume that for every $j \in \mathbb{N}$ $\frac{1}{m_\Gamma(F_n)}\int_{F_n} \delta_{\Phi^{K_j}(g)}$  converges to a probability measure  $\nu_j$ on $C(K_j,X)$, so that $\nu_j \ne \mu_{K_j}$. By Caratheodory's extension theorem, there exists a unique probability measure $\nu$ on $C(\Gamma,X)$ such that the pushforward of $\nu$ via the restriction map $g \mapsto g\mid_{K_j}$ is equal to $\nu_j$. It follows that $\int_{F_n} \delta_{g \Phi}dm_{F_n}(g)$ converges to $\nu$, and so $\nu$ is $\Gamma$-invariant, but $\nu \ne \mu$. This contradicts unique ergodicity of $\overline{\Gamma\Phi}$. 

 Conversely, suppose that $\Phi^F$ is well-distributed for every compact subset $F$ of $\Gamma$. 
 Let $(F_n)_{n=1}^\infty$ be a F\o lner sequence in $\Gamma$. It follows that there exists a probability measure  $\mu$ on $C(\Gamma,X)$ such that for every  $\gamma \in \Gamma$ we have that $\int_{F_n}\delta_{\gamma \Phi} dm_{F_n}(\gamma) \to \mu$ as $n \to \infty$ . A measure $\mu$ as above is clearly $\Gamma$-invariant and supported on $\overline{\Gamma \Phi}$. From this it follows that for every F\o lner sequence and any $\tilde \Phi \in \overline{\Gamma \Phi}$ we have $ \int_{F_n}\delta_{\gamma \tilde \Phi} dm_{F_n}(\gamma) \to \mu$. Now by the ergodic theorem for any ergodic, $\Gamma$-invariant probability measure $\tilde \mu$ on $\overline{\Gamma \Phi}$ there exists a F\o lner sequence $(F_n)_{n=1}^\infty$  such with respect to $\tilde \mu$ almost every $\tilde \Phi$ satisfies $ \int_{F_n}\delta_{\gamma \tilde \Phi} dm_{F_n}(\gamma) \to \tilde \mu$.
\end{proof}

 We remark that the above proof of \Cref{lem:unique_ergodicity_implies_equidistribution} does not require a pointwise ergodic theorem: We can extract an almost-everywhere converging subsequence from the $L^2$ ergodic theorem. It follows that $\mu$ is the unique $\Gamma$-invariant probability measure on $\overline{\Gamma \Phi}$.
 
\section{Unique ergodicity for homomorphisms into  compact groups}\label{sec:homo_non_abelian}

In this section, we present a short and direct proof for unique ergodicity and well-distribution of any continuous homomorphism from a locally compact amenable group $\Gamma$ into a compact group $G$. This is a simple and general result. In particular, there is no need to assume that $G$ is abelian. The statement can be viewed as a generalized version of Kronecker's theorem. 

After a suitable reformulation, we can see that amenability of $\Gamma$ is irrelevant: We deduce the above well-distribution result from the following: The orbit-closure of any  continuous homomorphism from a locally compact group to a compact group $\Gamma$ is a coset of a compact group and that Haar measure is the unique invariant measure.
The proof is direct short and direct: We first verify that the orbit closure is a coset of a compact group, then show that any $\Gamma$-invariant measure is invariant under translations by elements of the corresponding compact subgroup, hence must be Haar measure. The well-distribution result follows easily from the unique ergodicity result for the orbit closure via \Cref{lem:unique_ergodicity_implies_equidistribution}. 
All the arguments are quite classical.
Interestingly, the structure of the proofs of our main results for higher degree polynomials (with compact abelian range) is in some sense opposite: The well-distribution result is proved directly using Weyl's well-distribution criterion via characters, from well-distribution, we deduce the algebraic structure of the closure of the range, and the unique ergodicity result follows by applying the well-distribution result to an induced polynomial map into $G^\Gamma$.

We first characterize orbit closures of homomorphisms in $C(\Gamma,G)$, and show that these are uniquely ergodic:

\begin{prop}\label{prop:homo_uniquely_ergodic}
Let $\Phi:\Gamma \to G$ be a continuous homomorphism from a locally compact metrizable  group $\Gamma$ to a compact metrizable group $G$. 
\begin{enumerate}
    \item The orbit closure of $\Phi$ is a right coset of 
    a compact subgroup of $C(\Gamma,G)$.  
    \item $\Phi$ is uniquely ergodic, and the unique $\Gamma$-invariant probability measure on $\overline{\Gamma \Phi}$ is the Haar measure on the coset $\overline{\Gamma \Phi}$.
\end{enumerate}

\end{prop}
\begin{proof}

For every $g \in G$ let $g^\Gamma \in C(\Gamma,G)$ denote the constant function given by $g^\Gamma(\gamma)  := g$ for every $\gamma \in \Gamma$. The map $g \mapsto g^\Gamma$ defines a continuous homomorphism from the group $G$ into $C(\Gamma,G)$.
Let $G_\Phi=\overline{\Phi(\Gamma)}$ denote the closure in $G$ of the image of $\Gamma$ under $\Phi$.

If $g = \lim_{n \to \infty}\Phi(\gamma_n)$ and $\tilde g = \lim_{n \to \infty}\Phi(\tilde \gamma_n)$ then $g \tilde g= \lim_{n \to \infty}\Phi( \gamma_n \tilde \gamma_n)$ and $g^{-1} = \lim_{n \to \infty}\Phi( \gamma_n^{-1})$.
This shows that $G_\Phi$ is a closed subgroup of $G$.
Let 
\[ \tilde G_\Phi := \left\{ g^\Gamma :~ g\in G_\Phi \right\}.\]
Then $\tilde G_\Phi$  is a compact subgroup of  $C(\Gamma,G)$, as it the image of the compact group $G_\Phi$ under a continuous homomorphism.

\begin{enumerate}
    \item For any $\gamma,x \in \Gamma$ we have $(\gamma^{-1} \cdot \Phi)(x) (\Phi(x))^{-1}= \Phi(\gamma)$, so $(\gamma^{-1} \cdot \Phi)\Phi^{-1} = (\Phi(\gamma))^\Gamma \in C(\Gamma,G)$. 

    It follows that
    $\Gamma \Phi = \left\{ \Phi(\gamma)^\Gamma \Phi :~ \gamma \in \Gamma\right\}$ and so \[\overline{\Gamma \Phi}= \left\{ g^\Gamma \Phi :~ g \in G_\Phi \right\} = \tilde G_{\Phi} \Phi,\]
    namely the the right coset of $\tilde G_\Phi$  that contains $\Phi$.
    \item For any $g \in G_\Phi$ and $\gamma,x \in \Gamma$  we have $\gamma \cdot  (g^\Gamma \Phi)(x)= (g^\Gamma \Phi) ( \gamma^{-1} x) = g \Phi(\gamma^{-1}) \Phi(x)$. We have shown that $\gamma \cdot (g^\Gamma \Phi) = g^\Gamma \Phi(\gamma^{-1})^\Gamma \Phi$. Let $m_{\Phi}$ denote the Haar measure on $\overline{\Gamma \Phi}= \tilde G_\Phi \Phi$. We first show that $m_\Phi$ is indeed a $\Gamma$-invariant measure. Let $f:\overline{\Gamma \Phi} \to \mathbb{R}$ be a bounded measurable function. Then
    \[ \int f( \phi) dm_{\Phi}(\phi) = \int f( x^\Gamma \Phi)dm_{G_\Phi}(x).\]
    Thus, for any $\gamma \in \Gamma$ 
    \[ \int f(\gamma \cdot \phi) dm_{\Phi}(\phi) = \int f( x^\Gamma \Phi(\gamma^{-1}) \Phi)dm_{G_\Phi}(x)=
     \int f( x^\Gamma \Phi)  dm_{G_\phi}(x).\]
    In the last equality we used the fact that the Haar measure $m_{G_\Phi}$ on the compact group $G_\Phi$ is also invariant with respect to multiplication from the right, because $G_\Phi$ is compact (hence unimodular).
    This show that $m_\Phi$ is indeed $\Gamma$-invariant.
   
    Conversely, let $\mu$ be a $\Gamma$-invariant probability measure on $\overline{\Gamma \Phi}= \tilde G_\Phi \Phi$. 
    Let $\pi:\tilde G_\Phi \Phi \to G_\Phi$ denote the homeomorphism given by $\pi(\gamma^\Gamma \Phi) = \gamma$. Then $\pi_* \mu$ is a probability measure on $G_\Phi$. Let $f:G_\Phi \to \mathbb{R}$ be a continuous function. 
    Then 
    \[ \int f( \pi^{-1}(\phi))d\mu(\phi) = \int f(x) d(\pi_* \mu)(x).\]
    As in the previous part, we have:
    \[ \int f( \gamma \cdot \pi^{-1}(\phi))d\mu(\phi) =  \int f( x \Phi(\gamma^{-1}) )d(\pi_*\mu)(x).\]
    By the assumption that $\mu$ is a $\Gamma$-invariant measure on $\tilde G_\Phi \Phi$ we conclude that 
    \[ \int f(x) d(\pi_* \mu)(x) = \int f( x \Phi(\gamma^{-1}) )d(\pi_*\mu)(x).\]
    Because $\Phi(\Gamma)$ is dense in $G_\Gamma$, it follows that for every $g \in G_\Gamma$,
    \[ \int f(x) d(\pi_* \mu)(x) = \int f( x g )d(\pi_*\mu)(x).\]
    This means that $\pi_* \mu$ is a proabability measure on the group $G_\Gamma$ that is invariant with respect to multiplication from the right by any element of $G_\Gamma$, so $\pi_* \mu$ is equal to Haar measure on $G_\Gamma$. By definition of the map $\pi$ this implies that $\mu$ is equal to Haar measure on $\tilde G_\Phi \Phi$.
\end{enumerate}

\end{proof}

From \Cref{prop:homo_uniquely_ergodic}, it is easy to deduce that homomorphisms from an amenable group into a compact group are well-distributed:

\begin{prop}\label{prop:homo_equidist}
Let $\Gamma$ be a locally compact amenable group, let $G$ be a compact group, and let $\Phi:\Gamma \to G$ be a continuous homomorphism.
Then:
\begin{enumerate}
    \item $\overline{\Phi(\Gamma)}$ is a closed subgroup of $G$.
    \item $\Phi$ is well distributed with respect to Haar measure on $\overline{\Phi(\Gamma)}$.
\end{enumerate}
\end{prop}
\begin{proof}
\begin{enumerate}
    \item By \Cref{prop:homo_uniquely_ergodic} $\overline{\Gamma \Phi}$ is a coset of a compact subgroup of $C(\Gamma,G)$. Let $\pi:C(\Gamma,G) \to G$ be given by $\pi(\phi)=\phi(1_\Gamma)$. Then $\pi$ is a continuous and surjective homomorphism and
    $\pi(\overline{\Gamma \Phi)}= \overline{\Phi(\Gamma)}$. It follows that $\overline{\Phi(\Gamma)}$ is an image of a coset of a subgroup of a compact subgroup under a homomorphism, thus it is a coset of a closed subgroup. But $1_G = \Phi(1_\Gamma)$, so $1_G \in \overline{\Phi(\Gamma)}$. It follows that $\overline{\Phi(\Gamma)}$ is the coset containing $1_G$, hence it is a compact subgroup.
    \item By  \Cref{prop:homo_uniquely_ergodic} Haar  measure is the unique $\Gamma$-invariant probability measure on $\overline{\Gamma \Phi}$. The pushforward of this measure by $\pi$ is equal to Haar measure on $\overline{\Phi(\Gamma)}$. By \Cref{lem:unique_ergodicity_implies_equidistribution} this implies that $\Phi$ is equidistributed with respect to Haar measure on $\overline{\Phi(\Gamma)}$.
\end{enumerate}
\end{proof}

\section{Combinatorial properties of  Polynomial}\label{sec:poly_comb}
In the following section, we present several simple and purely algebraic or combinatorial results regarding polynomial maps. We will use these facts in the proof of our main result. 

Given a subset $S$ of a group $\Gamma$ and $n \in \N$ we write 
\[ S^{\le n} = \left\{ s_1\cdot \ldots \cdot s_\ell :~ 0 \le \ell \le n, s_1,\ldots,s_\ell \in S\right\}.\]
For convince  we denote $S^{\le 0} = \{1_\Gamma\}$.

We recall the following result of Leibman: 
\begin{prop}[See, \cite{MR1910931}  Proposition 1.15]\label{prop:fg_poly_deteremined_by_finite_set}
Let $S$ be a generating set for the group $\Gamma$. 
Any $P \in \Poly_d(\Gamma,G)$ is uniquely determined by its values on $S^{\le d}$. 
\end{prop}
We include a short proof for completeness:
\begin{proof}
    The proof is based on some elementary general observations that hold for an arbitrary function $f:\Gamma \to G$:
    \begin{itemize}
       
        \item \emph{Observation 1:} The restriction of $f\mid_{S^{\le (n+1)}}$ determines $\Delta_s f\mid_{S^{\le n}}$ for all $s \in S$. This is obvious from the equation $\Delta_s f(g) = f(sg)(f(g))^{-1}$.
         \item \emph{Observation 2:} For every subset $W \subset \Gamma$,  $f\mid_W$ together with $\Delta_s f\mid_W$ determine $f\mid_{sW}$. This is obvious from  $f(sw) = \Delta_s f(w)f(w)$.
         \item \emph{Observation 3:} For every subset $W \subset \Gamma$,  $f\mid_W$ together with $\Delta_s f\mid_{s^{-1}W}$ determine $f\mid_{s^{-1}W}$. This is obvious from the equation $f(s^{-1}w) = (\Delta_s f(s^{-1}w))^{-1}f(w) $.
         \item \emph{Observation 4:} If $S \subseteq \Gamma$ is a generating set, then $\{\Delta_s f\}_{s \in S}$ together with $f(1_\Gamma)$ determine $f$. Indeed, by observations 2 and 3 the subset of $\Gamma$ that is determined by  $\{\Delta_s f\}_{s \in S}$ together with $f(1_\Gamma)$ is invariant under multiplication from the left by elements of $S \cup S^{-1}$, hence equal to $\Gamma$.   
         \end{itemize}

    We prove the lemma by induction on the degree $d$. The cases $d=1,0$ are obvious because a constant map is uniquely determined by its value on a single element, and a group homomorphism is uniquely determined by its values on a generating set.
    Let $P:\Gamma \to G$ be a polynomial map of degree at most $d+1$. By the induction hypothesis, for every $s \in S$, since $\Delta_s P$ is a polynomial map of degree at most $d$, it is uniquely determined by $\Delta_s P \mid_{S^{\le (d)}}$. 
    By observation 1, $P \mid_{S^{\le (d+1)}}$ determines $\Delta_s P \mid_{S^{\le (d)}}$. Hence $P \mid_{S^{\le (d+1)}}$ determines $\Delta_s P$ for all $s \in S$. By observation $4$, $P \mid_{S^{\le (d+1)}}$ determines $P$.
\end{proof}

\begin{prop}\label{prop:fg_to_finite_periodic}
Let $\Gamma$ be a finitely generated group and $G$ be a finite group.
Any $P \in \Poly_d(\Gamma,G)$ is periodic, in the sense that the orbit of $P$ under $\Gamma$ is finite. Moreover, if $S$ is a finite generating set for $\Gamma$, then $|\Gamma P| \le |G|^{|S^{\le d}|}$.
%where:
%\[ S^{\le d} = \left\{ s_1\cdot \ldots \cdot s_\ell :~ 0 \le \ell \le d, s_1,\ldots,s_\ell \in S\right\}.\]
%(equivalently, $\ker(P)$ is a finite index subgroup of $\Gamma$).
%Furthermore, there is a bound on the size of $\Gamma P$ that depends only on the degree of $P$, the group $G$, and the number of minimal size of a generating set for the group $\Gamma$.
\end{prop}

\begin{proof}
Let $S$ be a finite generating set for $\Gamma$, and  $P \in \Poly_d(\Gamma,G)$. 
By \Cref{prop:fg_poly_deteremined_by_finite_set}, for every $\gamma \in \Gamma$ the function $\gamma \cdot P:\Gamma \to G$ is uniquely determined by $(\gamma P)\mid_{S^{\le d}}$. Hence the size of the orbit of $P$ under $\Gamma$ is bounded by $|G|^{|S^{\le d}|}$.
\end{proof}

\begin{prop}\label{prop:poly_finite_exponent}
    Let $\Gamma$ be an arbitrary group and $G$ an abelian group. For any $P \in \Poly_d(\Gamma,G)$ the following holds:
    \begin{enumerate}
        \item If $\gamma \in \Gamma$ and $n \in \N$ satisfy
        \begin{equation}\label{eq:Delta_gamma_n_zero}
             \Delta_{\gamma^n} P(x)  =0 \mbox{ for all } x \in \Gamma,
        \end{equation}
        then
        \[ n^d \Delta_\gamma P(x) = 0 \mbox{ for all } x \in \Gamma.\]
        \item If $\gamma \in \Gamma$, $n \in \N$ and $\alpha \in G$ satisfy 
        \begin{equation}\label{eq:n_Delta_gamma}
        n \Delta_\gamma P(x) = \alpha \mbox{ for all } x \in \Gamma,
        \end{equation}
        then 
        \[ \Delta_{\gamma^{n^d}} P(x) = n^d \alpha \mbox{ for all } x \in \Gamma.\]
    \end{enumerate}
\end{prop}
\begin{proof}
We will use the following formula:
\begin{equation}\label{eq:Delta_power_of_gamma}
    \Delta_{\gamma^k}P(x)= \sum_{j=0}^{k-1}\Delta_\gamma P(\gamma^j x).
\end{equation}
We will prove each of the claims by induction on the degree $d$.  When $d=0$, any polynomial map is constant; hence both claims are trivial. Suppose we proved  both claims for any $Q \in \Poly_{d-1}(\Gamma,G)$. 
\begin{enumerate}
    \item   Suppose $P \in \Poly_d(\Gamma,G)$  satisfies \eqref{eq:Delta_gamma_n_zero}.
    Let $Q= \Delta_\gamma P$. Then by the induction hypothesis, $n^{d-1} \Delta_\gamma Q(x)=0$ for every $x \in \Gamma$. Equivalently, $n^{d-1}Q(\gamma^j x) = n^{d-1}Q(x)$ for all $x \in \Gamma$.
    By \eqref{eq:Delta_gamma_n_zero} and \eqref{eq:Delta_power_of_gamma}
    \[ 0 = \Delta_{\gamma^n} P(x) = \sum_{j=0}^{n-1} \Delta_\gamma P(\gamma^j x).\]
     (using commutativity of $G$, we can multiplying the above equation by $n^{d-1}$  and we obtain:
     \[ 0 = \sum_{j=0}^{n-1} n^{d-1}Q(\gamma^j x) .\]
     Since $n^{d-1}Q(\gamma^j x) = n^{d-1}Q(x)$ for all $x \in \Gamma$ we conclude that $0 = \sum_{j=0}^{n-1} n^{d-1} Q(x)$,
     So $n^{d} \Delta_\gamma P(x) = n^{d-1} Q(x) =0$.
     \item  Suppose $P \in \Poly_d(\Gamma,G)$ satisfies \eqref{eq:n_Delta_gamma}. 
     Using \eqref{eq:Delta_power_of_gamma} we get
     \[ \Delta_{\gamma^{n^d}} P(x) = \sum_{i=0}^n \Delta_{\gamma^{n^{d-1}}}P(\gamma^{n^{d-1} i} x).\]
     By the induction hypothesis applyied on $Q = \Delta_{\gamma^{n^{d-1}}}P$, it follows that $\Delta_{\gamma^{n^{d-1}}}P(\gamma^{n^{d-1} i} x)= \Delta_{\gamma^{n^{d-1}}}P( x)$ for all $x \in \Gamma$.
     Since
     \[  n \Delta_{\gamma^{n^{d-1}}}P(x) = \sum_{j=0}^{n^{d-1}-1} n \Delta_\gamma P(\gamma^j x),\]
     We conclude that $\Delta_{\gamma^{n^d}} P(x) = n^d \alpha$. 
\end{enumerate}

\end{proof}
      In particular, \Cref{prop:poly_finite_exponent} shows that a polynomial map from a finite group $\Gamma$ to $(\R/\Z)^d$ takes values in a coset of a finite subgroup. We will use this fact later.
      This raises the following question:
      
\begin{quest}
     Let $P:\Gamma \to G$ be a polynomial map from a finite group $\Gamma$ into a group $G$. Is it always true that $P(\Gamma)$ is contained in a coset of a finite subgroup of $G$?
\end{quest}

\section{Polynomial maps are well-distributed}\label{sec:poly_well_dist}
In this section, we prove \Cref{thm:poly_to_torus_well_distributed_or_rational} and \Cref{thm:Weyl_finitely_generated_to_d_torus}.

The proof of \Cref{thm:poly_to_torus_well_distributed_or_rational} presented below is an adaptation Weyl's proof about well-distribution of polynomial sequences. In essence, it amounts to  replacing averages over ``long intervals'' in $\Z$ by averages over F\o lner sets in a locally compact group $\Gamma$. Paraphrasing  Green-Tao \cite[Section $4$, above Lemma 4.1 ]{MR2877065}, this is ``really just a reprise of the standard
theory of Weyl sums''. However, here there are slight subtleties due to ``exceptional cases'' such as the case where $\Gamma$ is finite, or compact. Unlike many other ``Weyl-type equdistribution theorems'', where  handling these ``exceptional cases'' is completely trivial, here some care is needed.

Let us introduce some terminology: 
\begin{definition}
    We say that a map $f:\Gamma \to \R/\Z$ is \emph{rational} if it takes values in a finite subgroup of $\R/\Z$. Equivalently, $f:\Gamma \to \R/\Z$ is rational if there exists $N \in \N$  such that $N P(\gamma) = 0$ for every $\gamma \in \Gamma$.
    We say that a map $f:\Gamma \to \R/\Z$ is \emph{off-rational} if it takes values in a coset of a finite subgroup of $\R/\Z$. Equivalently, $f:\Gamma \to \R/\Z$ is off-rational if and only if there exists $N \in \N$ and $\alpha \in \R/\Z$ such that $N P(\gamma) = \alpha$ for every $\gamma \in \Gamma$.
\end{definition}
Since any finite subgroup  of $\R/\Z$ is of the form $\{ \frac{k}{N} +\Z:~ k \in \N\}$ for some $N \in \N$, 
the following is a direct reformulation of \Cref{thm:poly_to_torus_well_distributed_or_rational}:
%\begin{thm}\label{thm:poly_to_torus_well_distributed_or_rational2}
%Let $P:\Gamma \to \R/\Z$ be a polynomial map from a finitely generated amenable group $\Gamma$ into the one dimensional torus $\R/ \Z$. 
%A polynomial map from a locally compact amenable group into $\R/\Z$ is  totally well distributed with respect to Haar measure on $\R/\Z$, unless it takes values in a coset of a finite subgroup of $\R/\Z$.
%In other words, 
If $P:\Gamma \to \R/\Z$ be a polynomial map from a  locally compact amenable group  $\Gamma$ into $\R/ \Z$  that is not  totally well-distributed, then
$P$ is off-rational.

The following lemma is fairly obvious; we present a proof for completeness.
\begin{lemma}\label{lem:well-distirbuted_not_off_rational}
Let $\Gamma$ be a locally compact amenable group and $f:\Gamma \to \R/\Z$ a function. If there exists $\gamma \in \Gamma$ and a finite-index subgroup $\Gamma_0 <\Gamma$ such that $\gamma f \mid_{\Gamma_0}$ is off-rational, then $f$ is not well-distributed with respect to Haar measure on $\R/\Z$.
\end{lemma}
\begin{proof}
    Suppose that $\gamma f \mid_{\Gamma_0}$ is off-rational for some subgroup $\Gamma_0$ with $[\Gamma: \Gamma_0] < \infty$. 
    Thus there exists $\alpha \in \R/\Z$ and $n \in \N$  such that $n f(\gamma \tilde \gamma)=\alpha$  for every $\tilde \gamma \in \Gamma_0$.
    Then there for any F\o lner sequence $(F_n)_{n=1}^\infty$ there exists $k \in \{0,\ldots,n\}$ such that 
    \[
    \liminf_{j \to \infty}  m_{F_j}\left(\left\{\gamma \in F_j:~ f(\gamma) = \alpha +\frac{k}{n} \right\} \right) \ge \frac{1}{n [\Gamma:\Gamma_0]}.
    \]
     
    It follows that  for every Probability measure $\mu$ on $\R/\Z$  that is a limit of point of $\left( \int_{F_n} \delta_{f(\gamma)} dm_{F_n}(\gamma)\right)_{n=1}^\infty$ for some F\o lner sequence $(F_n)_{n=1}^\infty$ in $\Gamma$, there exists $\beta \in \R/\Z$ such that 
    $\mu(\{\beta\}) >0$ . In particular, $f$ is not well-distributed  with respect to Haar measure on $\R/\Z$.
\end{proof}
%\end{thm}

We now recall a classical criterion of Weyl for well-distribution. Weyl's criterion allows us to reduce the problem of well-distribution of a function taking values in a compact abelian group into convergence of averages of characters.
For a compact metrizable abelian group $G$ we denote by $\hat G$ the (countable discrete) group of continuous homomorphisms from $G$ to $\mathbb{S}^1 =\{z \in \mathbb{C}^*~:~ |z| =1\}.$ 

\begin{lemma}[Weyl's well-distribution criterion]\label{lem:Weyl_well_distribution_criterion}
A  continuous function $\Phi:\Gamma \to G$ from a locally compact amenable group to a compact metrizable abelian group $G$ is well-distributed with respect to a Borel probability measure $\mu$ on $G$  if and only if
$\chi \circ \Phi$ has mean $\chi(\mu) := \int \chi d\mu$  for every character $\chi \in \hat G$.
In particular, $\Phi:\Gamma \to G$ is well distributed with respect to Haar measure on $G$ if and only if $\chi \circ \Phi$ has mean zero with respect for every non-trvial character $\chi \in \hat G$.
\end{lemma}
\begin{proof}
Pontryagin duality tells us that  any continuous function $f:G \to \mathbb{C}$ can be uniformly approximated by linear combinations of  characters. Thus, $\Phi:\Gamma \to G$ is well distributed with respect to a Borel probability measure $\mu$ on $G$  if and only if $f \circ \Phi$ has mean $\int f d\mu$ whenever $f= \sum_{i=1}^k \alpha_k \chi_k$ is a linear combination of characters. This happens if and only if $\chi \circ \phi$ has mean $\int \chi d\mu$ for every character $\chi \in \hat G$. 

If $\chi$ is the trivial character $\chi \circ \phi$ is the constant function $1$, so it always has mean $1$. Since $\int \chi dm_G= 0$ for any non-trivial character $\chi \in \hat G$, the ``in particular'' statement follows. 
   
\end{proof}

The first lemma states that if a function into $\mathbb{S}^1$ has some constant but non-trivial directional derivative, then that function must have zero mean:
\begin{lemma}\label{lem:const_deriv}
Let  $\Phi:\Gamma \to \mathbb{C}$ be a bounded measurable function. If there exists $\gamma \in \Gamma$ and $z_0 \in \mathbb{S}^1 \setminus \{1\}$ such that $\Phi(\gamma x) = z_0$ for all $x \in \Gamma$ then $\Phi$ has mean zero.
\end{lemma}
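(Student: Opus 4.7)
The plan is to exploit the identity $\Phi(\gamma x) = z_0 \Phi(x)$ (which is just a rewriting of the hypothesis, since $\Delta_\gamma \Phi(x) = \Phi(\gamma x)\overline{\Phi(x)}$ and $\Phi$ takes values in $\mathbb{S}^1$) together with left-invariance of Haar measure and the F\o lner property.

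First I would pick an arbitrary measurable set $F \Subset \Gamma$ with $0 < m_\Gamma(F) < \infty$ and integrate the identity $\Phi(\gamma x) = z_0\Phi(x)$ over $F$. The right-hand side is $z_0 \int_F \Phi \, dm_\Gamma$. For the left-hand side, by the substitution $y = \gamma x$ and left-invariance of $m_\Gamma$,
\[
\int_F \Phi(\gamma x)\, dm_\Gamma(x) = \int_{\gamma F} \Phi(y)\, dm_\Gamma(y).
\]
Dividing through by $m_\Gamma(F) = m_\Gamma(\gamma F)$ gives the key functional equation
\[
A_{\gamma F}(\Phi) = z_0 \, A_{F}(\Phi).
\]

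Second, I would apply this with $F = F_n$ for an arbitrary F\o lner sequence $(F_n)_{n=1}^\infty$ in $\Gamma$ and compare $A_{\gamma F_n}(\Phi)$ with $A_{F_n}(\Phi)$. Since $\|\Phi\|_\infty = 1$,
\[
\bigl| A_{\gamma F_n}(\Phi) - A_{F_n}(\Phi) \bigr| \;\le\; \frac{m_\Gamma(\gamma F_n \triangle F_n)}{m_\Gamma(F_n)} \;=\; \frac{m_\Gamma(\partial_\gamma F_n)}{m_\Gamma(F_n)} \;\xrightarrow[n\to\infty]{}\; 0
\]
by the F\o lner condition. Combining the two displayed facts, $(z_0 - 1)\, A_{F_n}(\Phi) \to 0$, and since $z_0 \neq 1$ we conclude $A_{F_n}(\Phi) \to 0$. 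As the F\o lner sequence was arbitrary, $\Phi$ has mean $0$.

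There is no real obstacle here: the argument is a one-line application of the van der Corput philosophy in its simplest form. The only point that requires care is the direction of the translation in the change of variables — one must verify that it is $\gamma F$ (and not $\gamma^{-1} F$) that appears, using left-invariance $m_\Gamma(\gamma F) = m_\Gamma(F)$ — so that the boundary term that appears matches the definition of $\partial_\gamma F_n$ used in the F\o lner condition.
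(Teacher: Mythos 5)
Your proof is correct and is essentially the same argument as the paper's: both use $\Phi(\gamma x)=z_0\Phi(x)$, left-invariance of Haar measure to identify $\int_F\Phi(\gamma x)\,dm_\Gamma(x)$ with $\int_{\gamma F}\Phi\,dm_\Gamma$, the bound by $m_\Gamma(\partial_\gamma F_n)/m_\Gamma(F_n)$, and the factor $z_0-1\neq 0$ to conclude. The only difference is cosmetic (you phrase it via the averages $A_{\gamma F_n}$ while the paper keeps everything as integrals over $F_n$).
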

\begin{proof}
Since $\Phi$ is bounded,  we can assume without loss of generality that $|\Phi(\gamma)| \le 1$ for all $\gamma \in \Gamma$.
Suppose that $z_0 \in \mathbb{S}^1 \setminus \{1\}$  and that $\Delta_\gamma \Phi(x) = z_0$ for all $x \in \Gamma$.
By definition, \[\Phi(\gamma x) = \Delta_\gamma \Phi(x) \Phi(x) = z_0 \Phi(x).\] So for every $k \in \Z$, we have that  $\Phi(\gamma^k x)= z_0^k \Phi(x)$.
Because $|\Phi(x)| \le 1$, for any compact set $F \Subset \Gamma$  we have
%\[
%\int_F \Phi(x) dm_\Gamma(x) = \int_F \Phi( \gamma x) dm_\Gamma(x)  + O\left(m_\Gamma(\partial_{F_0}(F))\right).
%\]
\[
\left| \int_F \Phi(x) dm_\Gamma(x) - \int_F \Phi( \gamma x) dm_\Gamma(x)  \right| \le  m_\Gamma(\partial_{\gamma}(F)).
\]

Substituting $\Phi(\gamma x) = z_0 \Phi(x)$ it follows that
\[
\left|
\int_F \Phi(x) dm_\Gamma(x) - \int_F z_0 \Phi(  x) dm_\Gamma(x)  \right| < m_\Gamma\left(\partial_{\gamma}(F)\right).
\]

Using $z_0 \ne 1$, we have:
\[
\left|
\int_F \Phi(x) dm_\Gamma(x) \right| < \frac{1}{|1-z_0|}m_\Gamma\left(\partial_{\gamma}(F)\right).
\]

Let $(F_n)_{n=1}^\infty$ be a F\o lner sequence in $\Gamma$. For each $n \in \N$ we have:
\[
\left|
\int_{F_n} \Phi(x) dm_{F_n}(x) \right| < \frac{1}{|1-z_0|}\frac{m_\Gamma\left(\partial_{\gamma}(F_n)\right)}{m_\Gamma(F_n)}.
\]

Taking $n \to \infty$ we conclude that $\Phi$ has mean zero.

\end{proof}

The main ingredient in the proof of \cref{thm:poly_to_torus_well_distributed_or_rational} is the following version of the van der Corput inequality. We reproduce the short proof for completeness  (see eg. \cite[Lemma 4.2]{MR1481813}):
\begin{lemma}[The Van der Corpout inequality for complex-valued functions on amenable groups]
\label{lem:VDC_trick}
Let $\Gamma$ be a locally compact group, and let $F,F_0 \subset \Gamma$ be Borel subsets of $\Gamma$ having finite positive Haar measure.
Let $\phi:\Gamma \to \mathbb{C}$ be a measurable function with $|\phi(\gamma)| \le 1$ for all $\gamma \in \Gamma$.
Then

\begin{equation}\label{eq:VDC_ineq}
| A_F(\phi) | \le \sqrt{ \int_{F_0} \int_{F_0} A_F\left(\sigma_{\gamma_2^{-1}}(\Delta_{\gamma_1^{-1} \gamma_2} \phi) \right)dm_{m_{F_0}}(\gamma_1)dm_{F_0}(\gamma_2) }+ \frac{m_\Gamma (\partial_{F_0} F)}{m_\Gamma(F)}
\end{equation}

\end{lemma}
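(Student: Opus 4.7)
The plan is to execute the classical van der Corput doubling trick, adapted to the amenable setting with averages over a window $F_0$. There are two basic moves: first, replace $A_F(\phi)$ by the average over $\gamma \in F_0$ of its translates $A_F(\sigma_\gamma \phi)$, which costs only a boundary term; second, apply Cauchy--Schwarz on $F$ to the resulting double average and identify the expanded integrand as a translate of the discrete derivative.

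First, I would observe that for each $\gamma \in F_0$, a change of variable using left-invariance of $m_\Gamma$ combined with $|\phi| \le 1$ yields
\[
|A_F(\phi) - A_F(\sigma_\gamma \phi)| \;\le\; \frac{m_\Gamma(\gamma F \triangle F)}{m_\Gamma(F)} \;=\; \frac{m_\Gamma(\partial_\gamma F)}{m_\Gamma(F)}.
\]
Averaging over $\gamma \in F_0$ and bounding each $\partial_\gamma F$ by $\partial_{F_0} F$ produces the replacement
\[
A_F(\phi) \;=\; \frac{1}{m_\Gamma(F_0)}\int_{F_0} A_F(\sigma_\gamma \phi)\, dm_\Gamma(\gamma) \;+\; E, \qquad |E| \le \frac{m_\Gamma(\partial_{F_0}F)}{m_\Gamma(F)},
\]
which accounts precisely for the additive error term in the claimed inequality.

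Second, I would swap the order of integration so that the main term becomes $A_F(h)$ with $h(g) := \frac{1}{m_\Gamma(F_0)}\int_{F_0}\phi(\gamma^{-1}g)\, dm_\Gamma(\gamma)$, and apply Jensen (i.e.\ Cauchy--Schwarz against the constant $1$) to conclude $|A_F(h)|^2 \le A_F(|h|^2)$. Expanding the squared modulus gives a double integral over $F_0 \times F_0$ whose integrand is $\phi(\gamma_1^{-1}g)\overline{\phi(\gamma_2^{-1}g)}$. The change of variable $y = \gamma_2^{-1}g$ rewrites this as $\phi(\gamma_1^{-1}\gamma_2 y)\overline{\phi(y)} = \Delta_{\gamma_1^{-1}\gamma_2}\phi(y)$, and reading the dependence back through $g$ identifies the integrand with $\sigma_{\gamma_2^{-1}}(\Delta_{\gamma_1^{-1}\gamma_2}\phi)(g)$ in the paper's convention. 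Taking square roots of both sides and recombining with the boundary error from the first step delivers the stated inequality.

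No conceptual obstacle is anticipated; this is essentially a Cauchy--Schwarz plus change-of-variable computation. The one place that demands care is the left/right bookkeeping: the operators $\sigma_\gamma$, $\Delta_\gamma$, and the boundary $\partial_\gamma F$ are all defined as left-actions, so one must pick the substitution $y = \gamma_2^{-1}g$ (as opposed to $y = \gamma_1^{-1}g$) in order to produce $\Delta_{\gamma_1^{-1}\gamma_2}$ in exactly the order stated. A minor side check is that the quantity under the square root is real and non-negative, which is automatic since it arises as $A_F(|h|^2)$ after expansion.
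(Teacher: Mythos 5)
Your proof is correct and follows essentially the same route as the paper's: shift-invariance of $A_F$ up to the boundary term $m_\Gamma(\partial_{F_0}F)/m_\Gamma(F)$, averaging over $F_0$, Cauchy--Schwarz (Jensen) applied to $A_F$, and the algebraic identification of $\sigma_{\gamma_1}\phi\,\overline{\sigma_{\gamma_2}\phi}$ with a shift of $\Delta_{\gamma_1^{-1}\gamma_2}\phi$. No substantive differences.
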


\begin{proof}
Observe that for any $f:\Gamma \to \mathbb{C}$ with $|f| \le 1$ and any $F_0,F \Subset \Gamma$, $\gamma \in F_0$ we have
\begin{equation}\label{eq:integration_almost_invariant}
\left|A_F(f) - A_F(\sigma_\gamma f) \right| \le   \frac{m_\Gamma (\partial_{F_0}F)}{m_\Gamma(F)}.
\end{equation}

Taking $f=\phi$, averaging  over $\gamma \in F_0$ and using the triangle inequality we obtain
\[
\left| A_F(\phi) - A_F\left(\int_{F_0}\left(\sigma_{\gamma}\phi  \right) dm_{F_0}(\gamma)\right)\right| \le\frac{m_\Gamma (\partial_{F_0} F)}{m_\Gamma(F)}.
\]
 Applying the triangle inequality again and the Cauchy-Schwarz inequality we get:
\[
\left| A_F(\phi) \right| \le 
\left[A_F\left(\left|\int_{F_0}\sigma_{\gamma}\phi dm_{F_0}(\gamma) 
          \right|^2
      \right)\right]^{\frac{1}{2}} +\frac{m_\Gamma (\partial_{F_0} F)}{m_\Gamma(F)}.
\]
Expanding
\[
\left|\int_{F_0}\sigma_{\gamma}\phi dm_{F_0}(\gamma) 
          \right|^2 =  \left( \int_{F_0}\int_{F_0} \sigma_{\gamma_1}\phi \overline{\sigma_{\gamma_2}\phi} dm_{F_0}(\gamma_1) dm_{F_0}(\gamma_2) \right),
\]
Using that $\sigma_{\gamma_1} \phi \overline{\sigma_{\gamma_2}\phi} = \sigma_{\gamma_2^{-1}}(\Delta_{\gamma_1^{-1} \gamma_2} \phi)$ we get:

\[
| A_F(\phi) | \le \sqrt{ \int_{F_0} \int_{F_0} A_F\left(\sigma_{\gamma_2^{-1}}(\Delta_{\gamma_1^{-1} \gamma_2} \phi) \right)dm_{F_0}(\gamma_1)dm_{F_0}(\gamma_2) }+ \frac{m_\Gamma (\partial_{F_0} F)}{m_\Gamma(F)}
\]

\end{proof}

Using the inequality \eqref{eq:VDC_ineq}, once can deduce the following:

\begin{lemma}\label{lem:VDC_ap}

    Let $\Gamma$ be a locally compact amenable group, and let $\phi:\Gamma \to \mathbb{C}$ be a measurable function with $|\phi(\gamma)|\le 1$ for all $\gamma \in \Gamma$. For any $\epsilon >0$
    Let 
    \[
    K_\epsilon = \left\{ \gamma \in \Gamma:~ \oA_\Gamma(\Delta_\gamma \phi) \ge \epsilon^2/2 \right\}.
    \]
    
    If $\oA_\Gamma(\phi) \ge \epsilon$ then $\oA_\Gamma( 1_K) \ge \epsilon^2/2$.
\end{lemma}

The version of the well-known van der Corput Lemma below  follows directly (see \cite[Theorem 2.11]{MR3479166} for a closely related statement, with a somewhat different proof):

\begin{prop}[The van der Corput Lemma for maps from amenable groups to compact abelian groups]\label{lem:VDC}
Let $\Phi:\Gamma \to G$ be a function from  a locally compact amenable group $\Gamma$ to a compact abelian group $G$. Suppose there exists a subset $N \subset \Gamma$ with density zero  such that $\Delta_\gamma \Phi: \Gamma \to G$ is well distributed with respect to the Haar measure on $G$ for any $\gamma \in \Gamma \setminus N$. Then $\Phi$ is well-distributed with respect to the Haar measure on $G$.  
\end{prop}

\begin{remark}
For an infinite locally compact group  $\Gamma$, \Cref{lem:VDC} implies the following: If $\Delta_\gamma \Phi: \Gamma \to G$ is well-distributed with respect to the Haar measure on $G$ for any $\gamma \in  \Gamma \setminus \{1_\Gamma\}$, then $\Phi$ is well-distributed with respect to the Haar measure on $G$. The above implication may fail when $\Gamma$ is a finite group. For example, the map $f(n)=n^2 \mod 3$ viewed as a polynomial map $p:\Z/3\Z \to \Z/3\Z$ is not  uniformly distributed on $\Z/3\Z$, but $\Delta_m f$ is uniformly distributed, for any $m \in \Z/3\Z \setminus \{0\}$.
\end{remark}

We will need a slightly more quantitative statement: 

\begin{prop}\label{prop:finitary_poly_mean_zero}
    For every finitely generated amenable group $\Gamma$, $d \in \N$ and $\epsilon >0$ there exists $k = k(\Gamma,d,\epsilon) \in \N$ such that 
    for every $P \in \Poly_d(\Gamma,(\R/Z))$  and  $\chi \in \widehat{(\R/Z)}$, either
    $\oA_\Gamma(\chi \circ P) < \epsilon$ or there exists $\alpha \in \R/\Z$ such that $\chi(P(\gamma))^k=\alpha$ for all $\gamma \in  \Gamma$.    
\end{prop} 

\begin{proof}
We prove the lemma by induction on $d$. 
The case  $d=0$ is trivial to prove  with $k=1$. 
The case $d=1$ follows easily from \Cref{lem:const_deriv}.
Fix $\epsilon >0$.
Suppose $P \in \Poly_d(\Gamma,\R/\Z)$ satisfies $\oA_\Gamma(\chi \circ f) \ge \epsilon$.
Let
\[ K = \left\{ \gamma \in \Gamma:~ \oA_\Gamma( \chi \circ  \Delta_\gamma P )  \ge \epsilon^2/2 \right\}.\]
From \Cref{lem:VDC_ap} it follows that 
\[ \oA_\Gamma ( 1_K) \ge \epsilon^2/2.\]
By the induction hypothesis, there exists $k_1 = k(\Gamma,d-1,\epsilon^2/2)$ such that for every $\gamma \in K$ there exists $\alpha_\gamma \in \R/\Z$ such that $\chi(\Delta_\gamma P(x))^{k_1} = \alpha_\gamma$ for all $x \in \Gamma$.
By part $(2)$ of \Cref{prop:poly_finite_exponent}, it follows that $\chi(\Delta_{\gamma^{k_1^{d-1}}} P(x)) = (\alpha_\gamma)^{k_1^{d-1}}$ for every $\gamma \in K$. By \Cref{lem:const_deriv}, since $\oA_\Gamma(\chi \circ f) \ne 0$, it follows from \Cref{lem:const_deriv} that $(\alpha_\gamma)^{k_1^{d-1}} =1$ for every $\gamma \in K$. This means that for every $\gamma \in G$ and $x \in \Gamma$ $\chi (\Delta_{\gamma^{k_1^d}}P(x))=1$.
By part $(1)$ of \Cref{prop:poly_finite_exponent}, it follows that for every $\gamma \in K$ and $x \in \Gamma$ we have $\chi (\Delta_{\gamma}P(x))^{k_1^{d^2}}=1$.
Let
\[
\Gamma_0= \left\{ \gamma \in \Gamma:~  \chi(\Delta_\gamma P(x))^{k_1^{d^2}} =1 \right\}.\]
Then $K \subseteq \Gamma_0$ so $\oA(1_{\Gamma_0}) \ge \oA(1_G) \ge \epsilon^2/2$. Since $\Gamma_0$ is a subgroup,
$[\Gamma: \Gamma_0] = (\oA(1_{\Gamma_0}))^{-1} < 4\epsilon^{-2}$. Let $k_2 = [\Gamma: \Gamma_0]$. It follows that $\gamma^{k_2} \in \Gamma_0$ for every $\gamma \in \Gamma$. Thus, for every $\gamma,x \in \Gamma$, $\chi(\Delta_{\gamma^{k_2}} P(x))^{ k_1^{d^2}} =1$.
Using part $(1)$ of \Cref{prop:poly_finite_exponent}, for every $x,\gamma \in \Gamma$
we have $\chi(\Delta_{\gamma} P(x))^{ k_2^d k_1^{d^2}} =1$.
It follows that the claim we intended to prove holds with $k=k(\Gamma,d,\epsilon)$ equal to the least common multiple of  $k_2^d k_1^{d_2}$, where $k_2$ runs over all positive integers up to $4\epsilon^{-2}$.
\end{proof}

\begin{proof}[Proof of \Cref{thm:poly_to_torus_well_distributed_or_rational}]
Suppose $P \in P_d(\Gamma,\R/\Z)$ is not well-distributed with respect to Haar measure. By Weyl's well-distribution criterion, it follows that there exists a non-trivial character $\chi \in \widehat{\R/\Z}$ and $\epsilon >0$ such that $\oA_\Gamma(\chi \circ P) > \epsilon$. By \Cref{prop:finitary_poly_mean_zero} there exists $\alpha \in \R/\Z$ and $k \in \N$ such that $\chi(P(\gamma))^k = \alpha$ for every $\gamma \in \Gamma$. This proves that $P$ takes values in a coset of a finite subgroup of $\R/\Z$.

It remains to show that if $P$ is well-distributed,  it must be totally well-distributed. Let $\Gamma' < \Gamma$ be a finite-index subgroup. Then for every $\gamma \in \Gamma$, the restriction of $\gamma P$ to $\Gamma'$ is a polynomial map from $\Gamma'$ to $\R/\Z$. If there exists $\gamma \in \Gamma$ such that this restriction is off-rational, by \Cref{lem:well-distirbuted_not_off_rational} $P$ itself cannot be well-distributed  with respect to a Haar measure. 
\end{proof}

\begin{proof}[Proof of \Cref{thm:Weyl_finitely_generated_to_d_torus}]
Let $\Gamma$ be  a finitely generated group. Choose $P \in \Poly_d(\Gamma , (\R/\Z)^d)$. Let $\widehat{(\R/\Z)^d}$ denote the dual group. Then $\widehat{(\R/\Z)^d}$ is isomorphic to the group $\Z^d$. 
 
Let
\[
R_P = \left\{ \chi \in \widehat{(\R/\Z)^d}~:~ \exists z \in \mathbb{S}^1 \mbox{ and } n \in \N \mbox{ s.t. } \chi(P(\gamma))^n=z \mbox{ for every } \gamma \in \Gamma\right\}.
\]

Clearly, $R_P$ is a subgroup of $\widehat{(\R/\Z)^d} \cong \Z^d$, hence finitely generated.
From the fact that $R_P$ is finitely generated, it follows that there exists $n \in \Z$ such that $\chi^n(P(\Gamma))$ is a singleton for every $\chi \in R_P$.
Let
\[T = \{ x \in (\R/\Z)^d:~ \chi(x) = 1 \mbox{ for all }  \chi \in R_P\}.\] Then $T$ is a closed subgroup of $(\R/\Z)^d$, and $\widehat{T} \cong \widehat{(\R/\Z)^d)}/ R_P$. 
From the definition of $R_P$, we see that it is equal to its own radical in $\widehat{(\R/\Z)^d)}$, in the sense that whenever $\chi^k \in R_P$ for some $\chi \in (\R/\Z)^d$ and $k \in \N$ then $\chi \in R_P$. This implies that $\widehat{T} \cong \Z^k$ for some $0 \le k \le d$, so $T$ is a connected subgroup of $(\R/\Z)^d$.
We claim that $P(\Gamma)$ is contained in a finite union of  cosets of $T$. Indeed, if $R_P$ is generated by $\chi_1,\ldots,\chi_r$ and $\chi_i(z)=\chi_i(z')$ for all $1 \le i \le r$ then $z$ and $z'$ are in the same coset of $T$. Since $\chi_i(P(\gamma))^n=1$ for all $1\le i \le k$, it follows that $P(\Gamma)$ is contained in the union of at most $n^r$ cosets of $T$.
Let $\overline{P}:\Gamma \to (\R/\Z)^d/T$ be given by $\overline{P}(\gamma)=P(\gamma)+T$.
Then by \Cref{prop:fg_to_finite_periodic}, there exists a finite-index subgroup $\Gamma_1 < \Gamma$ such that $\overline{P}$ is constant on every coset of $\Gamma_1$.  Equivalently, every coset $\gamma \Gamma_1 \in \Gamma/\Gamma_1$ there exists a coset  $\alpha(\gamma \Gamma_1) \in (\R/\Z)^d/T$ such that $P(\gamma \Gamma_1) \subseteq T+ \alpha(\gamma \Gamma_1)$.
Thus, the function $P_\gamma:\Gamma_1 \to T$ is well-defined. It remains to check that $P_\gamma:\Gamma_1 \to T$ is well-distributed with respect to Haar measure on $T$ (this will show in particular that $P_\gamma(\Gamma_1)$ is dense in $T$).
For this we must show that $\chi \circ P_\gamma$ has mean zero for any non-trivial character $\chi \in \widehat{T}$. Since $\widehat{T} \cong \widehat{(\R/\Z)^d}/R_P$, we need to show that for every $\tilde \chi \in \widehat{(\R/\Z)^d} \setminus R_P$, the function $\tilde \chi \circ P_\gamma$ has mean zero. 
By \Cref{thm:poly_to_torus_well_distributed_or_rational} for every $\chi \in \widehat{(\R/\Z)^d}$ either $\chi \circ P$ takes values in a coset of a finite subgroup of $\R/\Z$, or $\chi \circ P$ is totally well-distributed. In the first case, by definition $\chi \in R_P$. In the second case, $\chi \circ P_\gamma$ has mean zero for every $\gamma \in \Gamma$.
\end{proof}

\section{Unique ergodicity of polynomial maps}\label{sec:uniqu_ergo_poly}

In this section, we prove \Cref{thm:finitely_generated_to_compact_poly_uniquely_ergodic}.
In what follows, $\Gamma$ will be a finitely generated group and $G$ a compact abelian group.
The space $G^\Gamma$ of functions from $\Gamma$ to $G$ is equipped with the product topology, which makes it into a compact topological space. It is furthermore a compact abelian group, with respect to the pointwise addition in $G$. The group $\Gamma$ acts on $G^\Gamma$ by homemorphisms (the shift action). This action is furthermore an \emph{algebraic action}: Each element of $\Gamma$ acts as an  automorphism of the compact  group $G^\Gamma$. A polynomial map $P: \Gamma \to G$ is by definition an element of $G^\Gamma$. We denote by $\overline{\Gamma P}$ the closure in $G^\Gamma$ of the orbit of $P$ under the action of $\Gamma$.

The proof of \Cref{thm:finitely_generated_to_compact_poly_uniquely_ergodic} can be completed as follows:

\begin{proof}[Proof of \Cref{thm:finitely_generated_to_compact_poly_uniquely_ergodic}]
Let $\Gamma$ be a finitely generated group, let $G$ is a compact abelian group and  $P \in \Poly_d(\Gamma,G)$.

By Leibman's result \cite[Proposition 3.21]{MR2877065} there exists a nilpotent group $\Gamma_0$, a surjective homomorphism $h:\Gamma \to \Gamma_0$ and a polynomial map $P_0:\Gamma_0 \to G$ such that $P = P_0 \circ h$. 
There is an obvious bijection between $\Gamma_0$-invariant measures on $\overline{\Gamma_0 P_0}$ and $\Gamma$-invariant measures on $\overline{\Gamma P}$. We can thus assume without loss of generality that $P=P_0$, $\Gamma=\Gamma_0$ and $h$ is the identity. In other words, we can assume that  $\Gamma$ itself is nilpotent.
Since nilpotent groups are amenable, by \Cref{lem:unique_ergodicity_implies_equidistribution} it suffices to prove that for any finite set $F \subset \Gamma$ the map $P^F:\Gamma \to G^F$  is well-distributed with respect to some probability measure on $G^F$. By Weyl's well-distribution criterion, it suffices to check  that $\chi \circ P^F:\Gamma \to \mathbb{S}^1$ is well-distributed for any finite subset $F \subset \Gamma$ and for any $\chi \in \widehat{G^F}$. But as  $\chi \circ P^F:\Gamma \to \mathbb{S}^1$ is a polynomial map, using that $\mathbb{S}^1 \cong \R/\Z$,
by \Cref{thm:poly_to_torus_well_distributed_or_rational} either $\chi \circ P^F$ is well-distributed or it takes values in a coset of a  finite subgroup of $\mathbb{S}^1$. If $\chi \circ P^F$ takes values in a coset of a finite subgroup of $\mathbb{S}^1$, we can use the fact that $\Gamma$ is finitely generated and apply \Cref{prop:fg_to_finite_periodic}, to conclude that the $\Gamma$-orbit of $\chi \circ P^F$ is finite. In particular, $\chi \circ P^F$ is well-distributed with respect to some (finite-valued) probability measure.
This completes the proof.
\end{proof}

\section{Concluding remarks and further directions}\label{sec:concluding}

We conclude with remarks regarding possible extensions, refinements, and  generalizations.

\subsection{Polynomial maps into nilmanifold}
Leibman's theorems from \cite{MR2122919} and \cite{MR2122920} are direct analogs  of the statement of \Cref{thm:Weyl_finitely_generated_to_d_torus}, but not a more general case, nor are they are restricted case of ours. On the one hand, Leibman's theorems have less restrictive assumptions regarding  the range of the polynomial map $P$. In Leibman's theorems, the range  is allowed to be a connected nilpotent Lie group $G$, rather than a torus. In the cases where $G$ is not compact, well-distribution is taken modulo a  co-compact subgroup $G_1 < G$. So that the range is $X:= G/G_1$ is a compact connected nilmanifold, generally a compact homogeneous space but not necessarily a compact group (compact connected nilpotent Lie groups are abelian). An alternative ``quantitative''  proof for Leibman's theorem can be found in \cite{MR2877065}. 
    On the other hand, in Leibman's theorems, the domain of the polynomial map is restricted to be $\Gamma= \Z$ (as in \cite{MR2122919}) or $\Gamma= \Z^d$ (as in \cite{MR2122920}), rather than an arbitrary finitely generated amenable group, as in the statement of \Cref{thm:Weyl_finitely_generated_to_d_torus}.
    We believe it should be possible to extend \Cref{thm:Weyl_finitely_generated_to_d_torus} to apply to the more general setting where the range of the map $P$  is a nilpotent Lie group $G$ (as in Leibman's theorem). The key for such generalization is the following: Rather than using Wey's well-distribution criterion, use the well-distibution criterion of Leon Green \cite{MR0167569} and Parry \cite{MR267558}. That is, when dealing with compact nilmanifolds rather then abelian  rather than compact abelian groups, one needs to check means of all \emph{central characters}, namely continuous functions $f:G/G_1 \to \C$ such that for every $g$ in the center of $G$ and every $x \in X$ satisfy $f(g x) = \lambda_g f(x)$, for some $\lambda_g \in \C$. The proof of Leibman's theorem in \cite{MR2877065} follows this approach. From this point, it should be possible to deduce that $P:\Gamma \to X$ is uniquely ergodic, whenever $X$ is a compact nilmanifold and $P$ is a polynomial map (That is, when $P$ is the projection of a polynomial map $P:\Gamma \to G$, where $G$ is a Lie group).
    As mentioned, Leibman has proved that any polynomial map into an abelian group is actually a lift of some polynomial map from a nilpotent quotient, so the extension of Leibman's theorems to locally compact groups is really about extending them to locally compact nilpotent groups. The approach we suggested above does not seem to exploit this reduction.
\subsection{Beyond finitely generated groups}

In the statement of \Cref{thm:Weyl_finitely_generated_to_d_torus}, the domain $\Gamma$ of the polynomial map is assumed to be a finitely generated group amenable group. In fact, using \Cref{thm:poly_to_torus_well_distributed_or_rational} it is easy to prove an even simpler statement when  $\Gamma$ is assumed to be a connected locally compact amenable group. In that case one does not need to worry about finite index subgroups, and $P$ is always well-distributed with respect to Haar measure on a coset of a connected closed subgroup. Using this argument \Cref{thm:Weyl_finitely_generated_to_d_torus} can be generalized to the case where $\Gamma$ is a locally compact amenable group such that $\Gamma/\Gamma_0$ is finitely generated, where $\Gamma_0$ is the connected component of the identity. 

\begin{quest}
    Let $\Gamma$ be a countable amenable group, and  let $P:\Gamma \to (\R/\Z)^d$ be a polynomial map. Is $P$ well-distributed with respect to some probability measure on $(\R/\Z)^d$? 
\end{quest}

In view of  \Cref{thm:Weyl_finitely_generated_to_d_torus}, the question is relevant when $\Gamma$ is not finitely generated. 

The proof of \Cref{thm:finitely_generated_to_compact_poly_uniquely_ergodic} shows that the following question is essentially equivalent:
\begin{quest}
    Let $\Gamma$ be a countable group, and let $G$ be a compact abelian group. Is any  $P \in \Poly_d(\Gamma,G)$  uniquely ergodic?
\end{quest}

For non-discrete groups, $\Gamma$, the orbit closure of a polynomial map $P$ on $\Gamma$ might not be compact and might not support a $\Gamma$-invariant probability measure. The simplest example is the map $P:\R \to \R/\Z$ given by $P(t) = t^2 \mod 1$.
It is easy to check that the orbit of $P$ under $\R$ is given by
\[ \R P = \left\{ t \mapsto t^2 + 2at + a^2 \mod 1:~ a \in \R \right\}.\]
This is a closed but non-compact subset of $C(\R,\R/\Z)$, and it does not support any $\R$-invariant probability measure.
It can be shown that the orbit closure of a polynomial map $P$ from $\R^d$ into a compact group $G$ is compact if and only if the degree of $P$ is at most $1$ (namely if and only if $P$ is an affine homomorphism).
 
The above example also demonstrates the necessity of  compactness assumption on the  orbit closure in \Cref{lem:unique_ergodicity_implies_equidistribution}. 

\subsection{The structure of orbit closures of polynomial maps}

\Cref{prop:homo_uniquely_ergodic} tells us that $\overline{\Gamma P}$ is a coset of a compact subgroup of $C(\Gamma,G)$, for any $P \in \Poly_1(\Gamma,G)$, for any  compact group $G$ and any locally compact group $\Gamma$. 
As the example $P(t)=t^2 \mod 1$ shows, the compactness assumption can fail for polynomials maps of degrees greater than $1$.
When $G$ is compact and $\Gamma$ is a discrete countable group (in particular, when $\Gamma$ is a finitely generated group), $\overline{\Gamma P}$ is always compact, for any $P:\Gamma \to G$.
\Cref{thm:finitely_generated_to_compact_poly_uniquely_ergodic} tells us that in the case that $\Gamma$ is a finitely generated group, $G$ is a compact abelian group and $P:\Gamma \to G$ is a polynomial map, then $\overline{\Gamma P}$ admits a unique $\Gamma$ invariant measure. 

The following result provides additional information about the structure of orbit closures of polynomial maps.

\begin{prop}\label{prop:orbit_closure_finitely_gen_poly}
   Let $\Gamma$ be a finitely generated group,  let $G$ be a topological group, and let $P \in \Poly_d(\Gamma,G)$. Then the orbit closure of $P$ $\overline{ \Gamma P} \subseteq G^\Gamma$ is homeomorphic to $\overline{P^{S^{\le d}}(\Gamma)} \subseteq G^{S^{\le d}}$.
\end{prop}
\begin{proof}
Let $\Gamma$, $S$, and $G$ be as in the statement.
Let $\mathit{Poly}_d(\Gamma,G) \subseteq G^\Gamma$ denote the set of polynomial maps of degree at most $d$ from $\Gamma$ to $G$.
The set $\mathit{Poly}_d(\Gamma,G)$ is closed and $\Gamma$-invariant. Hence for any $P \in \mathit{Poly}_d(\Gamma,G)$ we have 
$\overline{\Gamma P} \subseteq \mathit{Poly}_d(\Gamma,G)$.
\Cref{prop:fg_poly_deteremined_by_finite_set} says that the restriction map  $\mathit{res}_{S^{\le d}}:\mathit{Poly}_d(\Gamma,G) \to G^{S^{\le d}}$ is injective. We will show that the inverse of $\Phi$ is injective. 
The proof of \Cref{prop:fg_poly_deteremined_by_finite_set} reveals that 
for any finitely generated group $\Gamma$ with a finite generating set $S$ and any $d \ge 1$, for any $\gamma \in \Gamma$ there exists $n \in \N$ and $W_\gamma \in \left((S^{\le d}) \cup (S^{-1})^{\le d} \cup \{1_\Gamma\}\right)^n$ such that for any group $G$ and any  polynomial map $P:\Gamma \to G$ of degree at most $d$ $P(\gamma)= \prod_{i=1}^n P(W_\gamma(i))$. 
This shows that the inverse of the function $\mathit{res}_{S^{\le d}}:\mathit{Poly}_d(\Gamma,G) \to G^{S^{\le d}}$ is continuous. Thus restriction of  $\mathit{res}_{S^{\le d}}$ to $\overline{\Gamma P}$ induces a homemorphism between $\overline{ \Gamma P}$ and $\overline{P^{S^{\le d}}(\Gamma)}$.
\end{proof}

In particular, if $G$ is a compact group of finite topological dimension and $\Gamma$ is a finitely generated group, then $\overline{\Gamma P}$ has finite topological dimension for any polynomial map $P:\Gamma \to G$.
Combining  \Cref{prop:orbit_closure_finitely_gen_poly} and \Cref{thm:Weyl_finitely_generated_to_d_torus}, we deduce that the orbit closure of any polynomial map from a finitely generated group into $(\R/\Z)^d$ is a finite union of cosets of a finite-dimensional connected subgroup of $((\R/\Z)^d)^\Gamma$.

If $G$ is a compact abelian group that does not embed in $(\R/\Z)^d$ (namely, if the dual group is not finitely generated), the orbit closure of a polynomial map $P:\Gamma \to G$ need not be a finite union of cosets of a subgroup, even in the basic case $\Gamma = \Z$, as shown by the following example:
Let $G= \prod_p (\Z/ p\Z)$, where the product is over all the odd primes $p$, and let $P:\Z \to G$ be given by
\[P(n)_p = n^2 \mod p \mbox{ for every } n \in \Z \mbox{ and every odd prime } p.\]
Then $\overline{P(\Z)}$ is precisely the subset of $g \in G$ such that $g_p$ is a quadratic residue modulo $p$ for every odd prime $p$. This shows that $\overline{P(\Z)}$ is not a finite union of cosets, so neither is $\overline{\Z P}$.

However, a slight elaboration of the proof of \Cref{thm:Weyl_finitely_generated_to_d_torus} reveals that there exists a (possibly trivial) closed, connected, subgroup $H$ of $G^\Gamma$ such that every connected component of $\overline{\Gamma P}$ is a coset of $H$.

%\subsection{}

\subsection{Quantitative well-distribution}

In \cite{MR2877065} Green and Tao obtained quantitative equidistribution results for polynomial sequence. Loosely speaking, it was shown that for any polynomial sequence $P:\Z \to X$ into a compact nilmanifold $X$,
for any sufficiently long interval $I$ the empirical measure $\int \delta_{P(k)} dm_I (k)$ is ``close'' to be a mixture of Haar measures on boundedly many cosets of submanifolds. The notion of ``close'' was defined in terms of explicit, effective inequalities, that bound the difference between integrals of the respective measures against sufficiently regular functions. The parameters involved in these inequalities are  the degree of the polynomial, the ``complexity'' of the manifold $X$, and the interval length.
We believe that both \Cref{thm:poly_to_torus_well_distributed_or_rational} and \Cref{thm:Weyl_finitely_generated_to_d_torus} should have ``quantitative'' versions (and so should the conjectured nilpotent extension of \Cref{thm:Weyl_finitely_generated_to_d_torus}). However,
quantitative results of this type are much more cumbersome  to formulate compared to ``qualitative results'', let alone prove.
 Green and Tao applied the  quantitative equidistribution results in their spectacular proof of the  M\"{o}bius and Nilsequences conjecture, where as currently we  do not have in mind a clear application for quantitative versions of \Cref{thm:poly_to_torus_well_distributed_or_rational} or \Cref{thm:Weyl_finitely_generated_to_d_torus}.

\bibliographystyle{amsplain}
%\nocite{*}
\bibliography{library}
\end{document}